\newtheorem*{teo}{Theorem}
\newcommand{\RP}{\mathbb{RP}}
\newcommand{\matR}{\mathbb{R}}
\newcommand{\matH}{\mathbb{H}}
\newcommand{\calC}{{\cal C}}
\newcommand{\ptwoirred}{$\mathbb{P}^2$-irreducible}
\newcommand{\Lthreeone}{L_{3,1}}
\newcommand{\Lfourone}{L_{4,1}}
\newcommand{\timtil}{\begin{picture}(12,12)
\put(2,0){$\times$}\put(2,4.5){$\sim$}\end{picture}}
\newcommand{\co}{\colon\thinspace}
\title{Orientable closed 3-manifolds with surface-complexity one}
\author{Gennaro {\sc Amendola}\footnote{Supported by a Type~A Research Fellowship of the Department of Mathematics and Applications of the University of Milano-Bicocca.}}
\begin{document}

\maketitle

\begin{abstract}
After a short summary of known results on surface-complexity of closed 3-manifolds, we will classify all closed orientable 3-manifolds with surface-complexity one.
\end{abstract}

\begin{center}
{\small\noindent{\bf Keywords}\\
3-manifold, complexity, immersed surface, cubulation.}
\end{center}

\begin{center}
{\small\noindent{\bf MSC (2010)}\\
57M27 (primary), 57M20 (secondary).}
\end{center}

\section*{Introduction}

An approach to the study of closed 3-manifolds consists in filtering them.
The aim is to find a function from the set of closed 3-manifolds to the set of natural (or real positive) numbers, so that the number associated to a closed 3-manifold is a measure of how complicated the manifold is.
For closed surfaces, this can be achieved by means of genus.
For closed 3-manifolds, the problem has been studied very much and many possible functions has been found.
For example, the Heegaard genus, the Gromov norm, the Matveev complexity have been considered.

All these functions fulfil many properties.
For instance, they are additive under connected sum.
However, some of them have drawbacks.
The Heegaard genus and the Gromov norm are not finite-to-one, while the Matveev complexity is.
Hence, in order to carry out a classification process, the third one is more suitable than the first two.
The Matveev complexity is also a natural measure of how complicated the manifold is, because if a closed 3-manifold is \ptwoirred\ and different from the sphere $S^3$, the projective space $\RP^3$ and the lens space $\Lthreeone$, then its Matveev complexity is the minimal number of tetrahedra in a triangulation of the manifold (the Matveev complexity of $S^3$, $\RP^3$ and $\Lthreeone$ is zero).
Such functions are also potential tools for inductive proofs.

The author~\cite{amendola} defined another function (called {\em surface-complexity}), from the set of closed 3-manifolds to the set of natural numbers, by means of triple points of particular immersions of closed surfaces.
In this paper, we will give a short summary of known results on surface-complexity of closed 3-manifolds and we will classify all closed orientable 3-manifolds of surface-complexity one.
Those with surface-complexity zero have been classified in~\cite{amendola}, and the irreducible ones are $S^3$, $\RP^3$ and the lens space $\Lfourone$.
Among those with surface-complexity one there are 11 irreducible ones, which are listed in Table~\ref{table:list}.
\begin{table}[t]
  \begin{center}
  \begin{tabular}{l@{\hspace{1.2cm}}l@{\hspace{1.2cm}}l}
  \toprule
  lens & other elliptic & flat \\
  \midrule
  $L_{6,1}$ & $(S^2,(2,1),(2,1),(2,-1) )$ & $T^3$\\
  $L_{8,3}$ & $(S^2,(2,1),(2,1),(3,-2) )$ & $(S^2,(2,1),(4,1),(4,-3) )$\\
  $L_{12,5}$ & & $(S^2,(3,1),(3,1),(3,-2) )$\\
  $L_{14,3}$ & & $(S^2,(2,1),(2,1),(2,1),(2,-3) )$\\
  & & $(RP^2,(2,1),(2,-1) )$\\
  \bottomrule
  \end{tabular}
  \end{center}
  \caption{Irreducible orientable closed 3-manifolds with surface-complexity one.}
  \label{table:list}
\end{table}
(For Seifert manifolds we have used the orbit invariants.)
The list up to complexity two has been obtained independently by Kazakov~\cite{Kazakov}.

Vigara~\cite{Vigara:calculus} used triple points of particular transverse immersions of connected closed surfaces to define the triple point spectrum of a 3-manifold.
The definition of the surface-complexity is similar to Vigara's one, but it has the advantage of being more flexible.
This flexibility has allowed the proof of many properties fulfilled by the surface-complexity, such as finiteness, naturalness and subadditivity.

In the Appendix we will give the proof of a theorem stated in~\cite{amendola} without a proof,
which had to be given in a subsequent paper.
For this purpose, we must distinguish the orientable case from the non-orientable one.
In the former case the result is a weaker formulation of the classification of orientable 3-manifolds with surface-complexity one.
For the sake of completeness, we will also give the proof of the non-orientable part, even if it is not strictly related to the orientable case treated in this paper.

\paragraph{Acknowledgements}
I am very grateful to Sergei Matveev and Vladimir Tarkaev for useful discussions, and to the referee for his or her useful comments and corrections.
I would also like to thank the Department of Mathematics and Applications of the University of Milano-Bicocca for the nice welcome.

\section{Definitions}

Throughout this paper, all 3-manifolds are assumed to be connected and closed.
By $M$, we will always denote such a (connected and closed) 3-manifold.
Using the {\em Hauptvermutung}, we will freely intermingle the differentiable,
piecewise linear and topological viewpoints.

\paragraph{Dehn surfaces}
A subset $\Sigma$ of $M$ is said to be a {\em Dehn surface of $M$}~\cite{Papa}
if there exists an abstract (possibly non-connected) closed surface $S$ and a transverse immersion $f\co S\to M$ such that $\Sigma = f(S)$.
By transversality, in $\Sigma$ there are only the three types of points shown in Fig.~\ref{fig:neigh_Dehn_surf} (called {\em simple}, {\em double} and {\em triple}, respectively).
\begin{figure}[t]
  \centerline{
  \begin{tabular}{ccc}
    \begin{minipage}[c]{3.5cm}{\small{\begin{center}
        \includegraphics{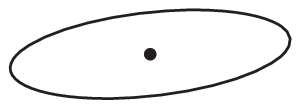}
      \end{center}}}\end{minipage} &
    \begin{minipage}[c]{3.5cm}{\small{\begin{center}
        \includegraphics{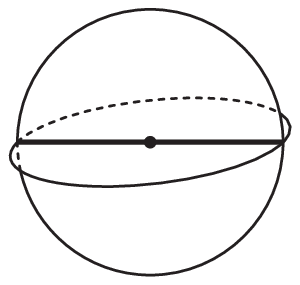}
      \end{center}}}\end{minipage} &
    \begin{minipage}[c]{3.5cm}{\small{\begin{center}
        \includegraphics{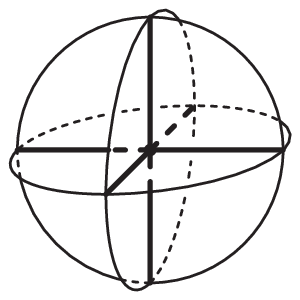}
      \end{center}}}\end{minipage} \\
    \begin{minipage}[t]{3.5cm}{\small{\begin{center}
        Simple\\point
      \end{center}}}\end{minipage} &
    \begin{minipage}[t]{3.5cm}{\small{\begin{center}
        Double\\point
      \end{center}}}\end{minipage} &
    \begin{minipage}[t]{3.5cm}{\small{\begin{center}
        Triple\\point
      \end{center}}}\end{minipage}
  \end{tabular}}
  \caption{Neighbourhoods of points (marked by thick dots) of a Dehn surface.}
  \label{fig:neigh_Dehn_surf}
\end{figure}
The set of triple points is denoted by $T(\Sigma)$; non-simple points are called {\em singular} and their set is denoted by $S(\Sigma)$.
In all figures, triple points are always marked by thick dots and the singular set is also drawn thick.

\paragraph{(Quasi-)filling Dehn surfaces and surface-complexity}
A Dehn surface $\Sigma$ of $M$ will be called {\em quasi-filling} if $M \setminus \Sigma$ is made up of balls.
Moreover, $\Sigma$ is called {\em filling}~\cite{Montesinos} if its singularities induce a cell-decomposition of $M$; more precisely,
\begin{itemize}
\item $T(\Sigma) \neq \emptyset$,
\item $S(\Sigma) \setminus T(\Sigma)$ is made up of intervals (called {\em
    edges}),
\item $\Sigma \setminus S(\Sigma)$ is made up of discs (called {\em regions}),
\item $M \setminus \Sigma$ is made up of balls (i.e.~$\Sigma$ is quasi-filling).
\end{itemize}

Since $M$ is connected, the quasi-filling Dehn surface $\Sigma$ is connected.
Moreover, $M$ minus some (suitably chosen) balls is a regular neighbourhood of $\Sigma$ and hence collapses to $\Sigma$.
It is by now well-known that a filling Dehn surface determines $M$ up to homeomorphism and that every $M$ has filling Dehn surfaces (see, for instance, Montesinos-Amilibia~\cite{Montesinos} and Vigara~\cite{Vigara:present}, see also~\cite{Amendola:surf_inv}).

\paragraph{Surface-complexity}
The {\em surface-complexity} $sc(M)$ of $M$ is equal to $c$ if $M$ possesses a quasi-filling Dehn surface with $c$ triple points and has no quasi-filling Dehn surface with less than $c$ triple points~\cite{amendola}.

\section{Properties}
\label{sec:properties}

In this section we will describe some results on surface-complexity and minimal quasi-filling Dehn surfaces.
Details and proofs, unless explicitly stated, can be found in~\cite{amendola}.

\paragraph{Minimality and finiteness}
A quasi-filling Dehn surface $\Sigma$ of $M$ is called {\em minimal} if it has a minimal number of triple points among all quasi-filling Dehn surfaces of $M$, i.e.~$|T(\Sigma)|=sc(M)$.

Let us give some examples of quasi-filling Dehn surfaces without triple points, which are clearly minimal.
Only two topological surfaces are quasi-filling Dehn surfaces of a 3-manifold: the sphere $S^2$ and the projective plane $\RP^2$, which are quasi-filling Dehn surfaces of the sphere $S^3$ and the projective space $\RP^3$, respectively.
Two projective planes intersecting along a loop non-trivial in both of them form a quasi-filling Dehn surface of $\RP^3$, which will be called {\em double projective plane}.
A sphere intersecting a torus (resp.~a Klein bottle) along a loop is a quasi-filling Dehn surface of $S^2\times S^1$ (resp.~$S^2 \timtil S^1$).
The {\em quadruple hat} (i.e.~a disc whose boundary is glued four times along a circle) is a quasi-filling Dehn surface of the lens-space $\Lfourone$.
Therefore, we have that $S^3$, $\RP^3$, $S^2\times S^1$, $S^2 \timtil S^1$ and $\Lfourone$ have surface-complexity zero.

Minimal quasi-filling Dehn surfaces without triple points are clearly not filling, but if we take into account only \ptwoirred\ 3-manifolds (except for three ones), we have minimal filling Dehn surfaces.
More precisely, suppose $M$ is \ptwoirred, then we have two cases:
\begin{itemize}
\item
if $sc(M)=0$, then $M$ is $S^3$, $\RP^3$ or $\Lfourone$;
\item
if $sc(M)>0$, then $M$ has a minimal filling Dehn surface.
\end{itemize}
Note that the only 3-manifolds that are prime but not irreducible are $S^2\times S^1$ and $S^2 \timtil S^1$, hence the theorem implies that every prime 3-manifold, except the five manifolds described above, has a minimal filling Dehn surface.

Since there is a finite number of filling Dehn surfaces having a fixed number of triple points, we have that for any integer $c$ there exists only a finite number of \ptwoirred\ 3-manifolds having surface-complexity $c$.

\paragraph{Minimal quasi-filling Dehn surfaces}
Not all the minimal quasi-filling Dehn surfaces of a \ptwoirred\ 3-manifold are indeed filling.
However, they can be all constructed starting from filling ones (except for $S^3$, $\RP^3$ and $\Lfourone$, for which non-filling ones must be used) and applying a simple move.
The move acts on a quasi-filling Dehn surface near a simple point as shown in Fig.~\ref{fig:bubble_move} and is called a {\em bubble-move}.
\begin{figure}[t]
  \centerline{\includegraphics{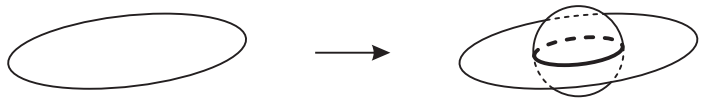}}
  \caption{Bubble-move.}
  \label{fig:bubble_move}
\end{figure}
If a quasi-filling Dehn surface $\Sigma$ is obtained from a quasi-filling Dehn surface $\overline{\Sigma}$ by repeatedly applying bubble-moves, we will say that $\Sigma$ {\em is derived from} $\overline{\Sigma}$.

The result above on minimal filling Dehn surfaces in the \ptwoirred\ case can be improved by means of a slightly subtler analysis.
Suppose $M$ is \ptwoirred.
If $\Sigma$ is a minimal quasi-filling Dehn surface of $M$, we have the following cases:
\begin{itemize}
\item
If $sc(M)=0$, one of the following holds:
\begin{itemize}
\item
$M$ is $S^3$ and $\Sigma$ is derived from $S^2$,
\item
$M$ is $\RP^3$ and $\Sigma$ is derived from $\RP^2$ or from the double projective plane,
\item
$M$ is $\Lfourone$ and $\Sigma$ is derived from the four-hat.
\end{itemize}
\item
If $sc(M)>0$, then $\Sigma$ is derived from a minimal filling Dehn surface of $M$.
\end{itemize}

\paragraph{Cubulations and naturalness}
A {\em cubulation} of $M$ is a cell-decomposition of $M$ such that
\begin{itemize}
\item each 2-cell (called a {\em face}) is glued along 4 edges,
\item each 3-cell (called a {\em cube}) is glued along 6 faces arranged like the
  boundary of a cube.
\end{itemize}
Note that self-adjacencies and multiple adjacencies are allowed.

The following construction is well-known (see~\cite{Aitchison-Matsumotoi-Rubinstein, Funar, Babson-Chan}, for instance).
A filling Dehn surface $\Sigma$ of $M$ can be constructed from a cubulation $\calC$ of $M$ by considering for each cube of $\calC$ the three squares shown in Fig.~\ref{fig:cube_to_surf} and 
by gluing them together (up to isotopy, we can suppose that the squares fit together through the faces).
\begin{figure}[t]
  \centerline{\includegraphics{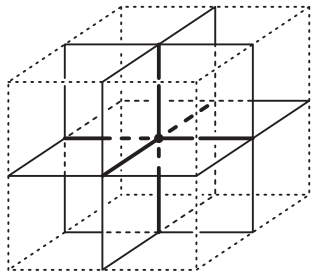}}
  \caption{Local behaviour of duality.}
  \label{fig:cube_to_surf}
\end{figure}
Conversely, a cubulation $\calC$ of $M$ can be constructed from a filling
Dehn surface $\Sigma$ of $M$ by considering an abstract cube for each triple
point of $\Sigma$ and by gluing the cubes together along the faces (the identification of each pair of faces is chosen by following the four germs of regions adjacent
to the respective edge of $\Sigma$).
The cubulation and the filling Dehn surface constructed in such a way are said
to be {\em dual} to each other.

The construction above allowed to prove that if $M$ is \ptwoirred\ and is different from $S^3$, $\RP^3$ and $\Lfourone$, then its surface-complexity is equal to the minimal number of cubes in a cubulation of $M$.

\paragraph{Subadditivity}
An important feature of a complexity function is to behave well with respect to the
cut-and-paste operations.
The surface-complexity is subadditive under connected sum.
Namely, the surface-complexity of the connected sum of 3-manifolds is less than or equal to the sum of their surface-complexities.
We do not know whether it is indeed additive.

\paragraph{Estimations}
In general, calculating the surface-complexity $sc(M)$ of $M$ is very difficult, but estimating it is relatively easy.
More precisely, it is quite easy to give upper bounds for it.
If one constructs a quasi-filling Dehn surface $\Sigma$ of $M$, the number of triple points of $\Sigma$ is an upper bound for the surface-complexity of $M$.
Afterwards, the (usually difficult) problem of proving the sharpness of this bound arises.

There are explicit constructions of quasi-filling Dehn surfaces of $M$ starting from triangulations, Heegaard splittings and Dehn surgery presentations of $M$.
They allow to prove some estimations.

\begin{description}
\item{\em Triangulations}
Suppose that $M$ has a triangulation with $n$ tetrahedra.
Then, the inequality $sc(M) \leqslant 4n$ holds.

\item{\em Heegaard splittings}
Suppose that $H_1 \cup H_2$ is a Heegaard splitting of $M$ such that the meridians of the handlebody $H_1$ intersect those of $H_2$ transversely in $n$ points.
Then, the inequality $sc(M) \leqslant 4n$ holds.
This estimation can be improved for any connected sum of \ptwoirred\ 3-manifolds $M_k$, such that no $M_k$ is $\Lthreeone$.
Indeed, suppose that $H_1 \cup H_2$ is a Heegaard splitting of such a 3-manifold $M$, that the meridians of the handlebody $H_1$ intersect those of $H_2$ transversely in $n$ points, and that the closure of one of the components into which the meridians of $H_1$ and $H_2$ divide $\partial H_1 = \partial H_2$ contains $m$ of these points.
Then, the inequality $sc(M) \leqslant 4n-4m$ holds.

\item{\em Dehn surgery}
Suppose that $M$ is obtained by Dehn surgery along a framed link $L$ in $S^3$ (hence $M$ is orientable).
Moreover, suppose that $L$ has a projection such that the framing is the blackboard one, such that there are $n$ crossing points, and such that there are $m$ components containing no overpass.
Then, the inequality $sc(M) \leqslant 8n + 4m$ holds.
If the framing is not the blackboard one, we have
$sc(M) \leqslant 8n + 4m + 4\sum_i |fr_i-w_i|$,
where $fr_i$ and $w_i$ are, respectively, the framing and the wirthe of the $i$-th component of the link.
\end{description}

\paragraph{Matveev complexity}
The surface-complexity is related to the Matveev complexity, at least in the \ptwoirred\ case.
The latter is defined using simple spines.
A polyhedron $P$ is {\em simple} if the link of each point of $P$ can be embedded in the 1-skeleton of the tetrahedron.
The points of $P$ whose link is the whole 1-skeleton of the tetrahedron are called {\em vertices}.
A sub-polyhedron $P$ of $M$ is a {\em spine} of $M$ if $M \setminus P$ is a ball.
The {\em Matveev complexity} $c(M)$ of $M$ is the minimal number of vertices of a simple spine of $M$.
The interested reader is referred to Matveev~\cite{Matveev:book} for a complete discussion on Matveev complexity.

Suppose that $M$ is \ptwoirred\ and different from $\Lthreeone$ and $\Lfourone$.
Then, the inequalities
$sc(M) \leqslant 4c(M)$
and
$c(M) \leqslant 8sc(M)$
hold.
The latter inequality has been improved in the orientable case by Tarkaev~\cite{Tarkaev}, who proved that $c(M) \leqslant 6sc(M)$ holds.
For the two missing manifolds, we have $c(\Lthreeone)=0$, $sc(\Lthreeone)=2$, $c(\Lfourone)=1$ and $sc(\Lfourone)=0$.

\section{Orientable 3-manifolds with surface-complexity one}
\label{sec:sc_one}

In this section we will describe how we have obtained the list of the orientable 3-manifolds with surface-complexity one.
The list of the irreducible orientable 3-manifolds with surface-complexity one is shown in Table~\ref{table:list}.
Note that six are elliptic and five are flat.
Note also that the only missing flat 3-manifold is $(S^2,(2,1),(3,1),(6,-5) )$.
Finally, note that there are no \ptwoirred\ orientable 3-manifolds having surface-complexity one of geometric-type $\matH^2\times\matR$, $\widetilde{{\rm SL}_2\matR}$, Sol, hyperbolic or non-geometric.

We will now explain the steps to obtain this list; we will not go into detail.
By means of duality, we can list cubulations with one cube up to homeomorphism.
There are three inequivalent ways to pair the six faces of a cube.
Each way involves three gluings of faces and each gluing can be done in four ways.
Therefore, we need to analyse $3\times 4^3=192$ cubulations.
Among these we have ruled out those whose underlying topological space is not a closed 3-manifold, and we have removed duplicates (up to isomorphism of the cubic structure); in order to carry out this step, we have used a simple computer program.
We have found 29 non-isomorphic cubulations of orientable 3-manifolds.

We have then examined each of these 29 cubulations to identify the underlying manifolds.
Giving a name to many cubulations has been quite easy.
Sometimes the summands of a connected sum and the Seifert structure are clear from the cubic structure.
For the other cases we have recognised the manifold by computing some topological invariants (homology, homotopy, $\epsilon$-invariant~\cite{Matveev-Ovchinnikov-Sokolov,Matveev:book}, Matveev complexity) and by searching in the list of Matveev~\cite{Matveev:book}.

In order to obtain the list of all orientable 3-manifolds with surface-complexity one, we apply the results on minimality and subadditivity described in the previous section.
Since the only 3-manifolds that are prime but not irreducible are $S^2\times S^1$ and $S^2 \timtil S^1$, each orientable 3-manifold with surface-complexity one is the connected sum of one irreducible orientable 3-manifold with surface-complexity one (listed in Table~\ref{table:list}) and some (possibly none) among $\RP^3$, $\Lfourone$, $S^2\times S^1$ and $S^2 \timtil S^1$.

\appendix

\section{Partial results in the non-orientable case}

We give here the proof of Theorem~A.1 of~\cite{amendola}.
\begin{teo}\hspace*{1cm}
\begin{itemize}
\item
There are no \ptwoirred\ orientable 3-manifolds having surface-complexity one of geometric-type $\matH^2\times\matR$, $\widetilde{{\rm SL}_2\matR}$, Sol, hyperbolic or non-geometric.
\item
There are \ptwoirred\ elliptic orientable 3-manifolds having surface-com\-plexity one (e.g.~the lens spaces $L_{6,1}$, $L_{8,3}$, $L_{14,3}$, $L_{12,5}$) and flat ones (e.g.~the 3-dimensional torus $T^3$).
\item
There are no \ptwoirred\ non-orientable 3-manifolds having surface-com\-plexity one of geometric-type $\matH^2\times\matR$, hyperbolic or non-geometric.
\item
There are \ptwoirred\ flat non-orientable 3-manifolds having surface-com\-plexity one (e.g.~the trivial bundle over the Klein bottle $K \times S^1$).
\end{itemize}
\end{teo}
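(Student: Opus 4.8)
The plan is to reduce every item to a \emph{finite} enumeration of one-cube cubulations, treating the orientable and non-orientable halves separately. The engine throughout is the equality, recalled in Section~\ref{sec:properties}, between the surface-complexity of a \ptwoirred\ 3-manifold (other than $S^3$, $\RP^3$ and $\Lfourone$) and the minimal number of cubes in a cubulation: a \ptwoirred\ $M$ with $sc(M)=1$ has a minimal filling Dehn surface with one triple point, dual to a one-cube cubulation. Since there are only finitely many ways to glue the six faces of a single cube, each item becomes a matter of listing and identifying manifolds.

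For the two orientable items this is exactly the classification carried out in Section~\ref{sec:sc_one}: the \ptwoirred\ orientable 3-manifolds with $sc=1$ are precisely the eleven entries of Table~\ref{table:list}. Every one of them is elliptic or flat, so none has geometric type $\matH^2\times\matR$, $\widetilde{{\rm SL}_2\matR}$, Sol, hyperbolic or non-geometric, which is the first item; and the second is witnessed inside the table by $L_{6,1},L_{8,3},L_{12,5},L_{14,3}$ and by $T^3$. In this sense the orientable assertions are a strict weakening of what has already been proved.

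For the fourth item I would produce an explicit one-cube cubulation of $K\times S^1$, equivalently a quasi-filling Dehn surface with a single triple point; this forces $sc(K\times S^1)\le 1$. As $K\times S^1$ is flat, its fundamental group is torsion-free and the manifold is aspherical, hence \ptwoirred, and it is none of the five surface-complexity-zero manifolds $S^3,\RP^3,S^2\times S^1,S^2\timtil S^1,\Lfourone$; therefore $sc(K\times S^1)\ge 1$ and equality holds.

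The third item is the only substantive one. I would extract from the $3\times4^3=192$ face-pairings those whose quotient is a non-orientable closed 3-manifold and identify each exactly as in Section~\ref{sec:sc_one}, via homology, the $\epsilon$-invariant and Matveev complexity, matching against Matveev's tables~\cite{Matveev:book}. The hyperbolic case is dispatched cleanly by the bound $c(M)\le 8\,sc(M)$, valid for \ptwoirred\ $M$ other than $\Lthreeone,\Lfourone$: such a non-orientable $M$ with $sc(M)=1$ has $c(M)\le 8$, while every closed hyperbolic 3-manifold has Matveev complexity at least $9$, so none can occur. The hard part will be excluding the $\matH^2\times\matR$ and non-geometric types, for which no comparably clean threshold is available: this rests on genuinely naming each of the finitely many manifolds arising from a one-cube cubulation and reading off its geometry (I expect them all to be flat or of Nil or Sol type). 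Organising the enumeration so that non-orientable duplicates are removed reliably, and certifying the identifications, is where the real work lies.
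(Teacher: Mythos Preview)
Your handling of the two orientable items and of the fourth item matches the paper's: both reduce to the classification of Section~\ref{sec:sc_one}, and both settle $K\times S^1$ by exhibiting a one-cube cubulation and noting it is none of the surface-complexity-zero manifolds.

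For the third item your route differs from the paper's. You propose to enumerate the non-orientable one-cube cubulations directly and identify each manifold, using the general bound $c(M)\le 8\,sc(M)=8$ only to dispatch the hyperbolic case and leaving the $\matH^2\times\matR$ and non-geometric types to the case-by-case identification you yourself flag as the hard part. The paper avoids enumeration entirely. It invokes Tarkaev's Lemma~2, which holds in the non-orientable case as well and gives the sharper bound $c(M)\le 6$ for any $3$-manifold admitting a one-cube cubulation; it then simply appeals to the existing census~\cite{Amendola-Martelli} of closed non-orientable \ptwoirred\ $3$-manifolds up to Matveev complexity~$7$, in which no manifold of type $\matH^2\times\matR$, hyperbolic, or non-geometric appears. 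Thus the work you anticipate is bypassed completely: no cubulations need be listed and no manifolds need be named. Your plan would also succeed once carried out, but it reproduces a classification already available in the literature. One further remark: your ``clean'' treatment of the hyperbolic case via $c\le 8$ together with the claim that closed hyperbolic $3$-manifolds have $c\ge 9$ is delicate non-orientably, since the census in~\cite{Amendola-Martelli} stops at complexity~$7$; the paper's bound $c\le 6$ sidesteps this issue.
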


\begin{proof}
The first two points follow from the classification of orientable 3-manifolds with surface-complexity one, described in the Section~\ref{sec:sc_one}.

In order to prove the third point, we apply Lemma~2 of~\cite{Tarkaev}, which holds also in the non-orientable case and can be stated as follows:\\
{\em If a 3-manifold has a cubulation with one cube, its Mateveev complexity is at most six.}

Since no \ptwoirred\ 3-manifold of geometric-type $\matH^2\times\matR$, hyperbolic or non-geometric appears in the census of closed non-orientable \ptwoirred\ 3-manifolds with Mateveev complexity up to six (see~\cite{Amendola-Martelli}), the third point is proved.

The fourth point follows from the fact that $K \times S^1$ has a cubulation with one cube.
\end{proof}

\noindent
\parbox{\textwidth}{
\textit{Department of Mathematics and Applications\\
University of Milano-Bicocca\\
Via Cozzi, 53, I-20125, Milano, Italy}\\[5pt]
\url{gennaro.amendola@unimib.it}\\[5pt]
\url{http://www.dm.unipi.it/~amendola/}
}

\end{document}